\numberwithin{equation}{section}
\newtheorem{theorem}[equation]{Theorem}
\newtheorem{lemma}[equation]{Lemma}
\newtheorem{corollary}[equation]{Corollary}
\newtheorem{proposition}[equation]{Proposition}
\newcommand{\Div}{{\mathrm{Div}}}
\newcommand{\PDiv}{{\mathrm{PDiv}}}
\newcommand{\p}{\mbox{.}}
\newcommand{\Z}{\mathbb{Z}}
\newcommand{\R}{\mathbb{R}}
\newcommand{\Q}{\mathbb{Q}}
\newcommand{\st}{\mbox{ } | \mbox{ }}
\title{A Riemann-Roch theorem for edge-weighted graphs}
\author{Rodney James}
\address{Department of Mathematics\\Colorado State University\\Fort Collins, CO 80523}
\curraddr{Department of Mathematical and Statistical Sciences\\University of Colorado\\Denver, CO 80217-3364}
\author{Rick Miranda}
\address{Department of Mathematics\\Colorado State University\\Fort Collins, CO 80523}
\begin{document}

\begin{abstract}
We prove a Riemann-Roch theorem for real divisors on edge-weighted graphs over the reals,
extending the result of Baker and Norine for integral divisors on graphs with multiple edges.
\end{abstract}
\maketitle


\section{Introduction}
The purpose of this article is to prove a Riemann-Roch theorem
for edge-weighted graphs,
inspired by (and extending)
the theorem of Baker and Norine (see \cite{BN}).
In that context, graphs without loops but with multiple edges are considered.
We consider the existence of multiple edges to be equivalent
to assigning to each pair of vertices an integral weight
which records the number of edges between them.
In our setting we consider arbitrary positive real numbers as edge weights.
This variation forces several interesting adjustments to be made to the theory.

Let $R$ be a subring of the real numbers $\R$.
An \emph{$R$-graph} $G$ is a finite connected graph
(without loops or multiple edges)
where each edge is assigned a weight, which is a positive element of $R$.
If we let the $n$ vertices of $G$ be $\{v_1,\ldots,v_n\}$,
we will denote by $p_{ij} = p_{ji}$ the weight of the edge joining $v_i$ and $v_j$.
If there is no edge connecting $v_i$ and $v_j$, we set $p_{ij}=p_{ji}=0$.

We define the \emph{degree} of a vertex $v_j$ of $G$ to be
the sum of the weights of the edges incident to it:
\[
\deg(v_j) = \sum_{i\neq j} p_{ij}.
\]

The \emph{edge-weighted Laplacian} matrix $P$ of $G$ is the symmetric $n \times n$ matrix defined by 
\[
(P)_{ij} =
\left\{
\begin{array}{ll}
-p_{ij} & \mbox{if } i \ne j \\
\deg(v_j) & \mbox{if } i = j.
\end{array}
\right.
\]
Note that if each $p_{ij} \in \{0,1\}$, $P$ is the Laplacian matrix of a regular graph;
as is the case for the regular graph Laplacian, $P$ is semi-positive definite with kernel
generated by $(1,1, \ldots, 1)$.

The \emph{genus} of $G$ is defined as
\[
g = \sum_{i<j} p_{ij} - n + 1,
\]
which allows $g$ to be negative when the $p_{ij}$ are sufficiently small.

An \emph{$R$-divisor} $D$ on $G$ is a formal sum
\[
D = \sum_{i=1}^n d_i \cdot v_i
\]
where each $d_i \in R$;
the divisors form a free $R$-module $\Div(G)$ of rank $n$.
We write $D_1 \geq D_2$ if the inequality holds at each vertex;
for a constant $c$, we write $D \geq c$
(respectively $D > c$) if $d_i \geq c$ 
(respectively $d_i > c$)
for each $i$.

The \emph{degree} of a divisor $D$ is 
\[
\deg(D)=\sum_{i=1}^n d_i
\] 
and the \emph{ceiling} of $D$ is the divisor
\[
\lceil D \rceil = \sum_{i=1}^n \lceil d_i \rceil \cdot v_i.
\]
The degree map is a homomorphism from $\Div(G)$ to $R$,
and the kernel $\Div_0(G)$ of divisors of degree zero
is a free $R$-module of rank $n-1$.

Let $H_j = \deg(v_j) \cdot v_j - \sum_{i\neq j} p_{ij} \cdot v_i$,
and set $\PDiv(G) = \left\{ \sum_{i=1}^n c_i H_i \st c_i \in \Z \right\}$
to be the free $\Z$-module generated by the $H_j$.
(Note that the $H_j$ divisors correspond to the columns of the matrix $P$.)
If $G$ is connected, $\PDiv(G)$ has rank $n-1$.

For two divisors $D, D' \in \Div(G)$,
we say that $D$ is \emph{linearly equivalent} to $D'$,
and write $D \sim D'$,
if and only if $D - D' \in \PDiv(G)$.

The \emph{linear system} associated with a divisor $D$ is
\begin{eqnarray*}
|D| &=& \{ D' \in \Div(G) \st D \sim D'
\mbox{ with } \lceil D' \rceil \ge 0  \}\\
&=& \{ D' \in \Div(G) \st D \sim D'
\mbox{ with } D' > -1 \}.
\end{eqnarray*}
We note that linearly equivalent divisors have the same linear system.
The use of the ceiling divisor in the definition above is the critical difference
between this theory and the integral theory developed by Baker and Norine \cite{BN}.
The following lemma gives a condition for $|D|=\emptyset$.
\begin{lemma} \label{DEmptyLemma}
If $D \le -1$, then $|D|=\emptyset$.
\end{lemma}
\begin{proof}
Suppose that $D \le -1$ and thus $\deg(D) \le -n$.  
If $|D| \ne \emptyset$, there is a $H \in \PDiv(G)$ such that $H+D > -1$,
and thus $\deg(H+D) > -n$.
Since $\deg(H)=0$, $\deg(H+D) = \deg(H) + \deg(D) = \deg(D)$,
hence we must have $|D| = \emptyset$.
\end{proof}

The essence of the Riemann-Roch theorem, for divisors on algebraic curves,
is to notice that the linear system corresponds to a vector space of rational functions,
and to relate the dimensions of two such vector spaces.
In our context we do not have vector spaces;
so we measure the size of the linear system in a different way (as do Baker and Norine).

Define the $h^0$ of an $R$-divisor $D= \sum_{i=1}^n d_i \cdot v_i$
\[
h^0(D) = \min\{ \deg(E) \st E \mbox{ is an $R$-divisor, }
E \geq 0 \mbox{ and } |D-E| = \emptyset \}.
\]
Note that $h^0(D) \ge 0$, with equality if and only if $|D|=\emptyset$
(since $E \ge 0$, $\deg(E)=0$ if and only if $E=0$ and thus $|D|=\emptyset$).   
We can find an upper bound for $h^0(D)$ as follows:  
set $E = \sum_{i=1}^{n} \max\{d_{i}+1,0\} \cdot v_{i}$, then $D-E \le -1$
and by Lemma~\ref{DEmptyLemma}, $|D-E|=\emptyset$ and thus
$h^0(D) \le \sum_{i=1}^n \max\{d_i+1,0\}$.
Since $h^{0}$ is defined to be the minimum degree of an $R$-divisor, 
$h^{0} \in R$; however, we will show that $h^{0}(D)$ does not depend on the change of $R$.

The \emph{canonical divisor} of $G$ is defined as 
\[
K = \sum (\deg(v_i)-2) \cdot v_i.
\]
Note that $\deg(K)=2g-2$.

The Riemann-Roch result that we will prove can now be stated.

\begin{theorem}\label{RRTheorem}
Let $G$ be a connected $R$-graph as above,
and let $D$ be an $R$-divisor on $G$.  Then
\[
h^0(D) - h^0(K-D) = \deg(D) + 1 - g.
\]
\end{theorem}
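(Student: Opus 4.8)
The plan is to follow the blueprint of Baker and Norine \cite{BN}, shifting every threshold from $0$ to $-1$ so as to match the ceiling: the equivalence $\lceil D'\rceil\ge 0\Longleftrightarrow D'>-1$ dictates that in the notion of effectivity, and in all firing/burning rules, ``$\ge 0$'' should read ``$>-1$''. Two structural features must be tracked: divisors are real but $\PDiv(G)$ is only a $\Z$-module, so every ``firing'' changes a divisor by an \emph{integral} combination of the $H_j$; and $\deg$ and $K$ are real-valued, so $h^0$ is real-valued and one cannot promote a strict inequality to a gain of $1$. The first task is the theory of reduced divisors. Fix a vertex $q$; for nonempty $A\subseteq V\setminus\{q\}$ and $v\in A$ put $\beta_A(v)=\sum_{w\notin A}p_{vw}$, so that firing all of $A$ (subtracting $\sum_{v\in A}H_v$) decreases $D(v)$ by exactly $\beta_A(v)$ on $A$ and does not decrease $D$ elsewhere. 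Call $D$ \emph{$q$-reduced} if $D(v)>-1$ for $v\neq q$ and, for every nonempty $A\subseteq V\setminus\{q\}$, some $v\in A$ has $D(v)\le\beta_A(v)-1$. I claim each class has a unique $q$-reduced representative: uniqueness by the standard max/min-coefficient argument (if $D\sim D'$ are $q$-reduced and $D-D'=\sum_i c_iH_i$ with $c_i\in\Z$, the vertex set where $c_i$ is largest, by $q$-reducedness of $D$, forces $\max_ic_i=c_q$, symmetrically $\min_ic_i=c_q$, hence $D-D'=c_q\sum_iH_i=0$), and existence by Dhar's burning algorithm with the shifted rule ``$v$ ignites once the weight of its edges to already-burnt vertices reaches $D(v)+1$'': first make $D>-1$ off $q$ by borrowing along the graph, then repeatedly burn from $q$ and, whenever the fire stalls at a nonempty unburnt set $A$, fire $A$; this move is legal exactly because no $v\in A$ ignited, and it drives $D$ monotonically toward the (already-known-unique) $q$-reduced form, with termination coming from the real-valued potential $\sum_ic_i$ in $D_q-D=\sum_ic_iH_i$ (normalized by $c_q=0$, bounded above by connectivity). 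The same analysis shows $D$ is $q$-reduced iff $D(v)>-1$ for $v\neq q$ and burning from $q$ consumes $V$, and gives the criterion $|D|\neq\emptyset\iff$ the $q$-reduced form of $D$ has value $>-1$ at $q$.

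Running the burning algorithm on a $q$-reduced $D$ orders the vertices $q=v_1,v_2,\dots,v_n$ so that at the ignition of $v_k$ ($k\ge 2$) the weight from $v_k$ to $\{v_1,\dots,v_{k-1}\}$ is $\ge D(v_k)+1$; summing (each edge contributing its weight once) gives $\deg(D)-D(q)\le\sum_{i<j}p_{ij}-(n-1)=g$. Hence a $q$-reduced divisor with $D(q)\le -1$ has $\deg(D)\le g-1$; contrapositively $\deg(D)>g-1$ forces the $q$-reduced form of $D$ to be $>-1$ everywhere, so $|D|\neq\emptyset$. Therefore $|D-E|=\emptyset$ implies $\deg(D-E)\le g-1$, which is Riemann's inequality $h^0(D)\ge\deg(D)+1-g$ for every $R$-divisor $D$ (trivial when the right side is $\le 0$).

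Next, to each total order $\prec$ on $V$ attach $\nu_\prec$ with $\nu_\prec(v)=\bigl(\sum_{w\prec v}p_{vw}\bigr)-1$. Then $\deg(\nu_\prec)=g-1$; the opposite order satisfies $K-\nu_\prec=\nu_{\prec^{\mathrm{op}}}$; and $|\nu_\prec|=\emptyset$ for every $\prec$, by the same coefficient bookkeeping as in the uniqueness proof (the $\prec$-least vertex gives $\nu_\prec=-1$ there, and the $\prec$-least element of any candidate firing set $A$ witnesses $\nu_\prec\le\beta_A-1$). The key dual ingredient is: \emph{if $|D|=\emptyset$ then there is an effective $F$ with $\deg(F)=g-1-\deg(D)$ and $|K-D-F|=\emptyset$}. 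To prove it, take the $q$-reduced form $D_q$ of $D$ for any $q$; since $|D|=\emptyset$, the criterion above gives $D_q(q)\le -1$. Let $\prec$ be the burning order $v_1=q\prec v_2\prec\cdots\prec v_n$ of $D_q$. For $k\ge 2$ the ignition inequality reads $D_q(v_k)\le(\text{weight from }v_k\text{ to }\{v_1,\dots,v_{k-1}\})-1=\nu_\prec(v_k)$, and $D_q(q)\le -1=\nu_\prec(v_1)$; hence $F:=\nu_\prec-D_q\ge 0$, $\deg(F)=(g-1)-\deg(D)$, and $D+F\sim D_q+F=\nu_\prec$, so $K-D-F\sim K-\nu_\prec=\nu_{\prec^{\mathrm{op}}}$, which has empty linear system. (The real-weighted subtlety is that the slacks $\nu_\prec(v_k)-D_q(v_k)$ are arbitrary nonnegative reals, not nonnegative integers as in \cite{BN}; what rescues the argument is that the ignition rule was shifted by exactly $1$, so the burning order dominates $\nu_\prec$ with no rounding.) Combined with Riemann's inequality this yields: $|N|=\emptyset$ implies $h^0(K-N)=\deg(K-N)+1-g$.

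To finish, given $D$ choose an effective $E$ realizing $h^0(D)=\min\{\deg(E):E\ge 0,\ |D-E|=\emptyset\}$, so $\deg(E)=h^0(D)$ and $N:=D-E$ has $|N|=\emptyset$. Using the elementary monotonicity $h^0(K-D)\le h^0(K-D+E)$ (adding an effective divisor cannot decrease $h^0$), the previous paragraph, and $\deg(K-N)=(2g-2)-\deg(D)+h^0(D)$, we get $h^0(K-D)\le h^0(K-N)=\deg(K-N)+1-g=g-1-\deg(D)+h^0(D)$, i.e.\ $h^0(K-D)-h^0(D)\le g-1-\deg(D)$. Applying this with $K-D$ in place of $D$ and using $\deg(K)=2g-2$ gives $h^0(D)-h^0(K-D)\le\deg(D)+1-g$, and the two inequalities together force $h^0(D)-h^0(K-D)=\deg(D)+1-g$. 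I expect the main obstacle to be the first two paragraphs above — establishing the reduced-divisor theory (the burning rule and its termination) and the dual inequality with all of the $-1$ thresholds, the integral firings, and the ceiling interacting correctly, so that the telescoping sum delivers precisely $\deg(D)\le g-1$ and $K-\nu_\prec$ lands back among the $\nu$'s.
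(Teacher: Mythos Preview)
Your proposal is correct and takes a genuinely different route from the paper's own proof. The paper never reproves the Baker--Norine machinery; instead it reduces the general statement to the integral case in three moves: (i) a change-of-rings argument (Proposition~\ref{changeofring}) showing $h^0$ is independent of the ambient subring $R$, (ii) a continuity/density argument (Proposition~\ref{QRProposition}) reducing $\R$-graphs to $\Q$-graphs, and (iii) a homothety $T_a(D)=aD+(a-1)I$ that clears denominators and carries the Riemann--Roch identity on a $\Q$-graph $G$ to the identity on the $\Z$-graph $aG$ (Theorem~\ref{T1}, Corollary~\ref{ZQCorollary}). Baker--Norine is then invoked as a black box for $\Z$-graphs. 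The paper explicitly remarks that ``it would be interesting to provide an independent proof''; your proposal does exactly that, by transporting the reduced-divisor/burning-algorithm argument directly to real edge weights with the threshold uniformly shifted from $0$ to $-1$. What your approach buys is a self-contained argument that explains \emph{why} the ceiling (equivalently, the condition $D'>-1$) is the correct notion of effectivity: it is precisely the shift that makes Dhar's burning inequality telescope to $\deg(D)\le g-1$ and makes $K-\nu_\prec$ land back among the $\nu$'s without any rounding. What the paper's approach buys is brevity and a clean structural statement (Riemann--Roch is invariant under the scaling $T_a$), at the cost of depending on \cite{BN}.

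One small point to watch: your argument is phrased over $\R$, and the theorem is stated for an arbitrary subring $R$. The paper handles this with Proposition~\ref{changeofring}. In your framework you can avoid that detour, because every auxiliary object you build --- the $q$-reduced form $D_q$, the divisors $\nu_\prec$, and the witness $F=\nu_\prec-D_q$ --- has coordinates in the ring generated by the $p_{ij}$ and the $d_i$, hence in $R$; so the minimum defining $h^0$ is attained by an $R$-divisor and the identity holds over $R$ as stated. It is worth making this explicit. The only other places that deserve a line of justification are that the minimum in the definition of $h^0$ is actually attained (closedness of $\{E\ge 0:|D-E|=\emptyset\}$ plus compactness of degree-bounded effective divisors), and that the iterated firing in your existence proof for $q$-reduced divisors terminates (the integer firing vector $\mathbf{c}$, normalized by $c_q=0$, is nonnegative, nondecreasing, and bounded above via $P_q^{-1}\ge 0$ as in Lemma~\ref{monotone}).
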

Since $h^0(K-D)\ge 0$, the classical Riemann inequality $h^0(D) \ge \deg(D)+1-g$ holds.
The results of Baker and Norine (see \cite{BN})
are exactly that the above theorem holds in the case of the subring $R = \Z$.
Our proof depends on the Baker-Norine Theorem in a critical way;
it would be interesting to provide an independent proof.

In \cite{GK} and \cite{MZ},
a Riemann-Roch theorem is proved for metric graphs with integral divisors;
these results differ from the present result in two fundamental ways.
First, our edge weights $p_{ij}$
and the coefficients of the divisors
are elements of the ring $R$.
Second, the genus $g$ is in $R$ for the present result,
whereas in \cite{GK} and \cite{MZ}, $g$ is a nonnegative integer.

We close this section with an example.
Consider the $R$-graph $G$ with two vertices and edge weight $p>0$.
For convenience, we will write the divisor $a \cdot v_1 + b \cdot v_2$
as the ordered pair $(a,b)$.
The principal divisors are
$\PDiv(G)=\{ (np,-np) \st n \in \Z\}$,
and $K=(p-2,p-2)$, with $g=p-1$.
Note that if $p<1$, we have $g<0$.

For $(a,b) \in \Div(G)$,
the linear system $|(a,b)|$ can be written as
\begin{eqnarray*}
|(a,b)| &=&  \{ (c,d) \in \Div(G)  \st  \lceil (c,d) \rceil \ge 0 \mbox{ and } (c,d) \sim (a,b) \} \\
 &=& \{ (a+np,b-np)  \st n \in \Z, a+np > -1, b-np > -1 \} \p
\end{eqnarray*}

In what follows, we will be brief, and leave most of the details to the reader to verify.
One can check that
$|(a,b)| \ne \emptyset$ if and only if $ \lceil (1+a)/p \rceil + \lceil (1+b)/p \rceil \ge 2$.
The value of $h^0((a,b))$ can be computed as follows:  let $\phi_{p}(x)=\lfloor (x+1)/p \rfloor$, and
\[
h^0((a,b)) = \left\{ \begin{array}{ll}
0 & \mbox{if } \phi_{p}(a) + \phi_{p}(b) < 0 \\
\min\{a+1-p \phi_{p}(a), b+1- p \phi_{p}(b) \} & \mbox{if } \phi_{p}(a) + \phi_{p}(b) = 0 \\
 a+b-p+2 &  \mbox{if } \phi_{p}(a) + \phi_{p}(b) > 0.
\end{array} \right.
\]
Note that for the divisor $D=(0,0)$, we have
\[
h^0((0,0)) = \left\{ \begin{array}{ll} 2-p & \mbox{if } p \le 1 \\ 1 & \mbox{if } p>1 \end{array} \right.
\]
and that the classical inequality $h^0(D) \le \deg(D) +1$ does not hold when $p<1$.

To check that the Riemann-Roch formula holds for a divisor $D=(a,b)$,
it is easiest to consider the three cases for the formula for $h^0((a,b))$.
We note that $(a,b)$ is in one of the three cases if and only if
$(p-2-a,p-2-b)$ is in the opposite case.
It is very straightforward then to check Riemann-Roch in case
$\phi_{p}(a) + \phi_{p}(b) \neq 0$;
one of the two $h^0$ values is zero.
It is a slightly more interesting exercise,
but still straightforward,
to check it in case $\phi_{p}(a) + \phi_{p}(b)=0$.

Unfortunately, the method of direct computation in this example becomes intractable for $R$-graphs with $n>2$.


\section{Change of Rings}

Note that in the definition of the $h^0$ of a divisor,
the minimum is taken over all non-negative $R$-divisors.
Therefore, a priori, the definition of $h^0$ depends on the subring $R$.
We note that if $R \subset S \subset \R$ are two subrings of $\R$,
then any $R$-graph $G$ and $R$-divisor $D$ on $G$
is also an $S$-graph and an $S$-divisor.
In this section we will see that the $h^0$ in fact does not depend on the subring.

Any $H \in \PDiv(G)$ can be written
as an integer linear combination of any $n-1$ elements of the set 
$\{ H_1, H_2, \ldots H_n \}$.
If we exclude $H_k$, for example,
then there are $n-1$ integers $\{m_j\}_{j \ne k}$ such that
$H=\sum_{j \ne k} m_j H_j$,
and we can write $H = \sum_{i=1}^n h_i \cdot v_i $
where 
\begin{equation} \label{hi}
h_i = \left\{ \begin{array}{cl}
m_i \deg(v_i) - \sum_{j \ne k,i} m_j p_{ij} & \mbox{if } i \ne k \\
- \sum_{j \ne k} m_j p_{jk} & \mbox{if } i=k.
\end{array} \right. 
\end{equation}

Let $P_k$ be the $(n-1) \times (n-1)$ matrix
obtained by deleting the $k$th row and column from the matrix $P$.
We can write the $h_i$'s other than $h_k$ in matrix form as
$\textbf{h} = P_k \textbf{m}$
where $\textbf{h}=(h_i)_{i \ne k}$ and $\textbf{m}=(m_i)_{i \ne k}$
are the corresponding column vectors.

For any $\textbf{x}=(x_i) \in \R^{n-1}$ and $c \in \R$,
we say $\textbf{x} \ge c$ if and only if $x_i \ge c$ for each $i$;
similarly for a matrix $A=(a_{ij})$,
we write $A \ge c$ if and only if $a_{ij} \ge c$ for each $i,j$.

A $(n-1) \times (n-1)$ matrix $M$ is \emph{monotone} if $M \textbf{x} \ge 0$ implies that $\textbf{x} \ge 0$
for all $\textbf{x} \in \R^{n-1}$; if $M$ is monotone, it follows that $M$ is nonsingular, with $M^{-1} \ge 0$
(see Chapter 6 in \cite{BP}).  
\begin{lemma} \label{monotone}
$P_k$ in monotone.
\end{lemma}
\begin{proof}
Let $V_i = \{ i' \st p_{ii'} > 0, i' \ne k, i' \ne i \}$
be the set of indices of vertices connected to $v_i$ (excluding $k$).
Suppose that it is the case that $x_i < 0$,
and that $x_i \leq x_{i'}$ for all $i' \in V_i$.
Then
\begin{eqnarray*}
(P_k \textbf{x})_i &=& x_i \deg(v_i) - \sum_{i' \in V_i} x_{i'} p_{ii'} \\
    &=& x_i p_{ik} + x_i \sum_{i' \in V_i} p_{ii'} - \sum_{i' \in V_i} x_{i'} p_{ii'} \\
    &=& x_i p_{ik} + \sum_{i' \in V_i}p_{ii'} (x_i - x_{i'}),
\end{eqnarray*}                           
and we note that with our assumptions, no term here is positive.
Since the sum is non-negative, we conclude that all terms are zero.
We have verified the following therefore, if $P_k \textbf{x} \geq 0$:
\begin{equation} \label{connected}
x_{i} < 0 \mbox{ and } x_{i} \leq x_{i'} \mbox{ for all } i' \in V_{i}
\Rightarrow 
p_{ik}=0 \mbox{ and } x_i = x_{i'} \mbox{ for all } i' \in V_i.
\end{equation}

Now assume that $\textbf{x} \ngeq 0$;
then there is an index $j$ such that $x=x_j<0$ and $x_j \le x_i$ for all $i \ne k$.
By (\ref{connected}), we conclude that $x_i = x$ for all $i \in V_j$,
and also that $p_{jk} = 0$.
We see, by induction on the distance in $G$ to the vertex $v_j$,
that we must have $x_i= x$ and $p_{ij}=0$ for all $i \neq k$.
This contradicts the connectedness of $G$: vertex $v_k$ has no edges on it.
Thus $P_{k}$ is monotone.

\end{proof}

We can now prove the main result for this section.

\begin{proposition}\label{changeofring}
Suppose that all of the entries of the matrix $P$ are in two subrings $R$ and $R'$,
and that all the coordinates of the divisor $D$ are also in both $R$ and $R'$.
Then (using the obvious notation)
$h^0 = h^{0'}$.
\end{proposition}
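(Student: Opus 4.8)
The plan is to show that the quantity $h^0(D)$ is computed by the same minimization regardless of whether we work over $R$ or over a larger subring, by reducing the computation to one that takes place over $\Q$ (or even over $\Z$ after clearing denominators), where it is genuinely canonical. The key observation is that $h^0(D)$ is a minimum of $\deg(E)$ over effective divisors $E$ such that $|D-E| = \emptyset$, and the emptiness of a linear system $|D'|$ is governed by a finite system of linear inequalities over the rationals once we know the matrix $P$ and the coordinates of $D'$: indeed $|D'| \neq \emptyset$ means there exist integers $c_1,\dots,c_n$ with $D' + \sum c_i H_i > -1$, which by Lemma~\ref{monotone} (nonsingularity of $P_k$ and non-negativity of $P_k^{-1}$) can be analyzed by inverting an integer matrix. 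So the truth value of ``$|D'| = \emptyset$'' depends only on the entries of $P$ and the coordinates of $D'$, not on any ambient ring; the only thing at issue is over which ring the competitor divisors $E$ are allowed to range.

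First I would reduce to the case where one of the two rings contains the other, say $R \subseteq R'$; the general case follows because $h^0$ computed over $R$ equals $h^0$ computed over the subring $R \cap R'$ generated inside $\R$ by the relevant data — more precisely, it suffices to show that enlarging the ring does not change $h^0$, since then both $h^0_R$ and $h^0_{R'}$ agree with $h^0_{R''}$ for any common overring $R''$. With $R \subseteq R'$ fixed, the inequality $h^0_{R'}(D) \leq h^0_R(D)$ is immediate, since every effective $R$-divisor $E$ witnessing the $R$-minimum is also an effective $R'$-divisor with $|D-E|_{R'} = |D-E|_R = \emptyset$ (the last equality again because emptiness of the linear system is ring-independent, as the principal divisors are $\Z$-combinations of the $H_i$ and the inequalities involved are the same). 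The substance is the reverse inequality: given an effective $R'$-divisor $E$ with $|D-E| = \emptyset$ achieving the $R'$-minimum $m = \deg(E)$, I must produce an effective $R$-divisor $E'$ with $\deg(E') \le m$ and $|D-E'| = \emptyset$.

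The hard part will be this last step — perturbing or adjusting the optimal $R'$-divisor $E$ down to an $R$-divisor without increasing its degree and without accidentally making $|D-E'|$ nonempty. The natural approach: the condition $|D-E|=\emptyset$ is an \emph{open} condition in $E$ in the following sense — by the example computations and the structure of the proof, $|D'|\neq\emptyset$ holds whenever $\lceil (D')_i \rceil$ satisfies a Baker--Norine-type inequality, which only depends on the ceilings $\lceil (D-E)_i \rceil$. If we decrease the coordinates of $E$ slightly, the coordinates of $D-E$ increase slightly, so the ceilings $\lceil (D-E)_i \rceil$ can only stay the same or increase, which can only make $|D-E|$ more likely to be nonempty — the wrong direction. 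So instead I would \emph{increase} some coordinates of $E$ to snap them to $R$-values and compensate by decreasing others; here one must use that the degree is what is being minimized, so $E$ cannot be decreased overall without contradiction, but individual coordinates have slack. The cleanest route is probably: observe that at the optimum, $D-E$ lies on the boundary of the region $\{D' : |D'| = \emptyset\}$, which is a rational polyhedral region (cut out by the $H_i$ and integer ceiling conditions, hence defined over $\Q$); a linear-programming / Fourier--Motzkin argument then shows the minimum of the linear functional $\deg$ over this rational polyhedron, intersected with $\{E \ge 0\}$, is attained at a vertex with coordinates in $\Q$, and after clearing denominators (scaling is not available, but the vertices of a $\Q$-polyhedron have $\Q$-coordinates regardless) we land in any subring containing the data. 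I expect the technical friction to be in making precise that $\{D' : |D'| = \emptyset\}$ is indeed a rational polyhedral set — this requires unwinding the ceiling divisors into finitely many integer-translate copies of the polyhedral cone defined by $P$ — and in confirming that the relevant minimum is a polyhedral vertex with coordinates forced into $R$; once that is set up, Proposition~\ref{changeofring} follows, and in particular one may always compute $h^0$ over $\Q$, which is what makes the reduction to Baker--Norine possible in the sequel.
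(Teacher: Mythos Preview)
Your overall architecture matches the paper's: reduce to comparing $R$ with a single overring (the paper simply takes this overring to be $\R$), observe that emptiness of $|D'|$ is ring-independent since $\PDiv(G)$ is defined by integer combinations of the $H_i$, and note that the easy inequality $h^0_{R'}(D) \le h^0_R(D)$ is immediate. The substance is indeed the reverse inequality, and the paper, like you, attacks it by examining an optimal $E$ over the larger ring.

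However, your proposed attack on the hard direction has genuine gaps. First, the set $\{D' : |D'| = \emptyset\}$ is \emph{not} a polyhedron: it is not even convex. It is the complement of an infinite union of translated open orthants indexed by the lattice $\PDiv(G)$; already for the two-vertex graph the region has staircase shape, so a straight linear-programming argument does not apply. Second, your references to ``rational polyhedral'' and ``$\Q$-coordinates'' are misplaced: the defining data (the $p_{ij}$ and the $d_i$) live in $R$, which need not meet $\Q$ beyond $\Z$, so nothing here is defined over $\Q$. Third, the claim that $|D'| \neq \emptyset$ ``only depends on the ceilings $\lceil (D')_i \rceil$'' is false: the condition is that $D' + H > -1$ for some $H$ in the lattice $\PDiv(G) \subset \R^n$, and the coordinates $h_j$ range over a dense-looking set of $R$-values, not integers. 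What is true, and what you would need, is that locally only finitely many $H$ are relevant; but establishing that finiteness is precisely the missing step.

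The paper's argument supplies exactly this. Suppose some coordinate $e_k \notin R$; by minimality of $\deg(E)$, for every $0 < \epsilon \le e_k$ one has $|D - E + \epsilon \cdot v_k| \neq \emptyset$, so there exist $H \in \PDiv(G)$ with $D - E + \epsilon v_k + H > -1$. Since $|D-E|=\emptyset$, the $k$th coordinate must fail: $d_k - e_k + h_k \le -1$, and in fact $< -1$ because $d_k, h_k \in R$ while $e_k \notin R$. So $-1 - \epsilon < d_k - e_k + h_k < -1$. The crucial technical step, and the real use of Lemma~\ref{monotone}, is to show that the possible $h_k$ values (as $H$ ranges over such principal divisors) form a \emph{discrete} subset of $\R$: writing $H = \sum_{i\neq k} m_i H_i$, one uses $P_k^{-1} \ge 0$ to bound the integers $m_i$ from below, and then an elementary estimate to show that only finitely many $h_k$ lie above any given threshold. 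Discreteness plus the strict upper bound $d_k - e_k + h_k < -1$ yields a maximum admissible $h_k = h$; shrinking $\epsilon$ below $e_k - d_k - h - 1$ then contradicts the lower bound. This discreteness is the finiteness input your polyhedral sketch is missing, and once one has it the contradiction is direct, with no LP machinery required.
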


\begin{proof} 
It suffices to prove the statement when one of the subrings is $R$ and the other is $\R$.
In this case we'll use the notation $Rh^0$ and $\R h^0$, respectively,
for the two minima in question.

First note that the linear system $|D|$ is clearly independent of the ring;
and in particular, whether a linear system is empty or not is also independent.

Therefore, the minimum in question for the $\R h^0$ computation
is over a strictly larger set of divisors;
and hence there can only be a smaller minimum.
This proves that $R h^0(D) \geq \R h^0(D)$.

Suppose that $E$ is an $\R$-divisor, $E \geq 0$, and $|D-E| = \emptyset$,
achieving the minimum, so that $\R h^0(D) = \deg(E)$.
If $E$ is an $R$-divisor,
it also achieves the minimum in $R$ and $R h^0(D) = \R h^0(D)$.
We will show that in fact $E$ must be an $R$-divisor.

Now suppose that $E$ is not an $R$-divisor,
and write $D=\sum_{i=1}^n d_i \cdot v_i$ 
and $E=\sum_{i=1}^n e_i \cdot v_i$,
with $k$ the index of an element such that $e_k \notin R$.
Since $\R h^0(D)=\deg(E)$,
for any $\epsilon \in \R$ with $0 < \epsilon \leq e_k$,
we have that $E-\epsilon \cdot v_k \geq 0$, and therefore
$|D-E + \epsilon \cdot v_k| \ne \emptyset$.
Hence there are principal divisors $H$
such that $D-E+\epsilon\cdot v_k + H > -1$.

Let $\mathcal{H}_\epsilon$ be the set of all such $H$;
by assumption, this is a nonempty set.
Note that if $H \in \mathcal{H}_\epsilon$,
and $H = \sum_{i=1}^n h_i \cdot v_i$,
then $d_i - e_i + h_i  > -1$ for each $ i \ne k$,
and
\begin{equation}\label{dkek}
d_k - e_k + \epsilon + h_k  > -1.
\end{equation}
Also, since $|D-E|=\emptyset$,
there is a $k'$ such that $d_{k'} -e_{k'} +h_{k'} \le -1$;
combined with the conditions above,
the only possibility is $k'=k$. 
Since $d_k \in R$, $h_k \in R$ and $e_k \notin R$,
$d_k-e_k+h_k \ne -1$, and thus $d_k -e_k + h_k < -1$.
Hence $-1-\epsilon <  d_k -e_k + h_k < -1$.

For any $H \in \mathcal{H}_\epsilon$,
there are unique integers $m_i$ such that $H=\sum_{i \ne k} m_i H_i$.
Let
$\textbf{d} = (d_i)_{i \ne k}$,
$\textbf{e} = (e_i)_{i \ne k}$, and
$\textbf{m} = (m_i)_{i \ne k}$
be the corresponding column vectors, and define
$ \textbf{f} = (f_i)_{i \ne k} = \textbf{d}-\textbf{e}+P_k \textbf{m} $.
Note that $\textbf{f} > -1$,
and $h_k = -\sum_{i\neq k} m_k p_{ik}$ by (\ref{hi}).

We can write
$ \textbf{m} = P_k^{-1} ( \textbf{f} - \textbf{d} + \textbf{e})$,
and by Lemma \ref{monotone}, $P_k^{-1} \ge 0$.  
Therefore, since $\textbf{e} \ge 0$ and $\textbf{f} > -1$,
the $m_i$ are bounded from below; set $M \le m_i$ for all $i \ne k$.

We claim that, for $H=\sum_{i \ne k} m_i H_i \in \mathcal{H}_\epsilon$,
the possible coordinates $h_k = -\sum_{i \neq k} m_k p_{ik}$
form a discrete set.
It will suffice to show that, for any real $x$,
the possible coordinates $h_k$
which are at least $-x$ is a finite set.

To that end, for any $x \in \R$ set 
$\mathcal{H}_\epsilon(x) =
\{ H \in \mathcal{H}_\epsilon \st \sum_{i \ne k} m_i p_{ik} \le x \}$;
for large enough $x$ this set is nonempty.

Fix $x\in \R$ such that $\mathcal{H}_\epsilon(x) \ne \emptyset$
and choose $j \ne k$ such that $p_{jk} > 0$.
For $H =\sum_{i \ne k} m_i H_i \in \mathcal{H}_\epsilon(x)$ we then have 
\[
M \le m_j \le
\frac{x - \sum_{i  \ne j,k} m_i p_{ik}}{p_{jk}} \le
\frac{x - M \sum_{i \in V_k,i \ne j} p_{ik}}{p_{jk}}.
\]
Thus the coefficients $m_j \in \Z$ are bounded both below and above,
and hence can take on only finitely many values.
It follows that the set of possible values of
$h_k = - \sum_{i \ne k} m_i p_{ik}$ is also finite,
for $H\in\mathcal{H}_\epsilon(x)$.
As noted above, this implies that these coordinates $h_k$,
for $H\in\mathcal{H}_\epsilon$, form a discrete set.
This in turn implies that there is a maximum value $h$ for the possible $h_k$,
since for all such we have $d_k -e_k + h_k < -1$.

Note that if $\epsilon < \epsilon'$,
then $\mathcal{H}_\epsilon \subset \mathcal{H}_{\epsilon'}$.

We may now shrink $\epsilon$ (if necessary) to achieve $\epsilon < e_k-d_k-h-1$.
This gives a contradition,
since now $d_k-e_k+\epsilon + h_k \leq d_k-e_k+\epsilon + h < -1$
for $H \in \mathcal{H}_\epsilon$,
violating (\ref{dkek}).
We conclude that $E$ is in fact an $R$-divisor as desired,
finishing the proof.

\end{proof}

The result above allows us to simply consider the case of $\R$-graphs.

At the other end of the spectrum, the case of $\Z$-graphs is
equivalent to the Baker-Norine theory.

The Baker-Norine dimension
of a linear system associated with a divisor $D$ on a graph $G$
defined in \cite{BN}
is equal to 
\[
r(D) = \min\{ \deg(E) \st
E \in \Div(G), E \ge 0 \mbox{ and } |D-E|_{BN} = \emptyset \} -1
\]
where here the linear system associated with a divisor $D$ is
\[
|D|_{BN} = \{ D' \in \Div(G) \st D' \ge 0 \mbox { and } D \sim D' \} \p
\]
If we are restricted to $\Z$-divisors on $\Z$-graphs,
the $h^0$ dimension is compatible with the Baker-Norine dimension:

\begin{lemma} \label{LBN}
If $G$ is a $\Z$-graph and $D$ a $\Z$-divisor on $G$, then $h^0(D)=r(D)+1$.
\end{lemma}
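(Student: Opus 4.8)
The plan is to reduce the claim to an elementary identification of the two linear-system notions, exploiting the fact that on a $\Z$-graph every divisor in sight is already integral, so the ceiling operation in the definition of $|D|$ does nothing. First I would record the key observation: if $G$ is a $\Z$-graph, then each generator $H_j$ of $\PDiv(G)$ has integer coordinates (by (\ref{hi}), or simply because it is the $j$th column of $P$), so $\PDiv(G) \subseteq \Div(G)$ consists of integral divisors. Consequently, for a $\Z$-divisor $D$, every $D'$ with $D' \sim D$ is again a $\Z$-divisor, and for such a $D'$ we have $\lceil D' \rceil = D'$; hence the three conditions $\lceil D' \rceil \ge 0$, $D' \ge 0$, and $D' > -1$ are equivalent. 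This yields the identification $|D| = |D|_{BN}$ for every $\Z$-divisor $D$ on a $\Z$-graph, and in particular $|D| = \emptyset$ if and only if $|D|_{BN} = \emptyset$.

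Next I would apply this with $D$ replaced by $D - E$, where $E$ ranges over the non-negative $\Z$-divisors on $G$. For each such $E$ the divisor $D - E$ is integral, so $|D-E| = \emptyset$ if and only if $|D-E|_{BN} = \emptyset$. Therefore the collection of non-negative $\Z$-divisors $E$ occurring in the definition of $r(D)$ coincides with the collection occurring in the definition of $h^0(D)$ (when the latter minimum is formed over $\Z$-divisors), and hence the two minima of $\deg(E)$ are equal. Comparing the displayed formulas then gives $h^0(D) = \min\{\deg(E)\} = r(D) + 1$.

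To make the statement unambiguous I would close by invoking Proposition \ref{changeofring}: the value of $h^0(D)$ does not depend on the subring used to form the minimum, so forming it with $E$ ranging over $\Z$-divisors produces the same number as forming it over $\R$-divisors. This completes the proof.

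I do not expect a genuine obstacle here; the entire argument is definition-chasing, and the only place calling for a moment's care is the mild ambiguity about which ring $E$ ranges over in the definition of $h^0$, which Proposition \ref{changeofring} settles at once. The conceptual content --- that passing from $\Z$-divisors to $\R$-divisors is exactly what the ceiling in the definition of $|D|$ is designed to accommodate, and that this device is invisible in the purely integral setting --- is precisely what makes the comparison with Baker--Norine immediate.
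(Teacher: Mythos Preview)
Your proposal is correct and follows the same idea as the paper: for a $\Z$-divisor $D'$ one has $\lceil D' \rceil = D'$, so $|D| = |D|_{BN}$ and the two minima coincide. Your version is simply more explicit than the paper's two-line proof, and your invocation of Proposition~\ref{changeofring} to remove any ambiguity about the ring over which $E$ ranges is a reasonable extra precaution that the paper leaves implicit.
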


\begin{proof}
Note that $\lceil D \rceil = D$ since each component of $D$ is in $\Z$.
This implies that $|D|=|D|_{BN}$ which gives the result.               
\end{proof}


\section{Reduction to $\Q$-graphs}

Note that the definition of $h^0(D)$ depends on the coordinates of $D$
and on the entries of the matrix $P$ which give the edge-weights of the graph $G$.
Indeed,
the set $\mathcal{E}$
of divisors with empty linear systems
depends continuously on $P$, as a subset of $\R^n$.
(If $\mathcal{F}_0$ is the set of divisors $D$ with $d_i > -1$ for each $i$, $\mathcal{E}$
is the complement of the union of all the translates of $\mathcal{F}_0$ by the columns of $P$.)

\begin{proposition}\label{QRProposition}
Suppose that the Riemann-Roch Theorem \ref{RRTheorem} is true for connected $\Q$-graphs.
Then the Riemann-Roch Theorem is true for connected $\R$-graphs.
\end{proposition}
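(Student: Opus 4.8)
The plan is to use continuity of all the relevant quantities in the Riemann-Roch formula as functions of the data $(P, D)$, and to approximate an arbitrary $\R$-graph and $\R$-divisor by $\Q$-graphs and $\Q$-divisors, showing that both sides of the identity $h^0(D) - h^0(K-D) = \deg(D) + 1 - g$ pass to the limit. Fix a connected $\R$-graph $G$ with edge matrix $P$ and a divisor $D$ on $G$. The combinatorial type of $G$ (which pairs $v_i, v_j$ are joined) is fixed; I would choose a sequence of rational edge-weight assignments $p_{ij}^{(m)} \to p_{ij}$ with $p_{ij}^{(m)} > 0$ exactly when $p_{ij} > 0$, so each approximating graph $G^{(m)}$ is a connected $\Q$-graph with the same underlying combinatorics, and simultaneously a sequence of rational divisors $D^{(m)} \to D$ (say with $d_i^{(m)} \to d_i$). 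By Proposition \ref{changeofring} it does not matter which subring we regard $D^{(m)}$ as living in, so we may compute $h^0$ over $\R$ throughout; and by hypothesis the Riemann-Roch theorem holds for each $(G^{(m)}, D^{(m)})$.

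The right-hand side is plainly continuous: $\deg(D^{(m)}) = \sum_i d_i^{(m)} \to \deg(D)$, and $g^{(m)} = \sum_{i<j} p_{ij}^{(m)} - n + 1 \to g$, so $\deg(D^{(m)}) + 1 - g^{(m)} \to \deg(D) + 1 - g$. The content of the proof is therefore the continuity of $D \mapsto h^0(D)$ jointly in the divisor and the edge weights. Here I would make precise the remark already in the text: the set $\mathcal{F}_0 = \{F \in \R^n \st f_i > -1 \text{ for all } i\}$ is a fixed open orthant-translate, and $\mathcal{E} = \mathcal{E}(P)$ is the complement in $\R^n$ of $\bigcup_{c \in \Z^{n-1}} (\mathcal{F}_0 + \sum_{j \ne k} c_j H_j(P))$, the union of translates of $\mathcal{F}_0$ by the lattice $\PDiv(G)$. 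Then
\[
h^0(D) = \min\{\deg(E) \st E \ge 0,\ D - E \in \mathcal{E}\},
\]
which one checks is exactly the $\ell^1$-distance (taxicab distance) from $D$ to the set $\mathcal{E}$ measured only in the positive-coordinate directions; concretely $h^0(D) = \min\{\deg(E) \st E \ge 0,\ D-E \in \mathcal{E}\}$, and since $\mathcal{E}$ is closed (its complement is a union of open sets) this minimum is attained. I would establish a Lipschitz-type bound: if $\|D - D'\|_1 \le \delta$ then $|h^0(D) - h^0(D')| \le \delta$ for a fixed graph (moving the target divisor by $\delta$ in $\ell^1$ moves the optimal $E$ by at most $\delta$ in degree), and separately that if the columns $H_j(P^{(m)})$ converge to $H_j(P)$ then the sets $\mathcal{E}(P^{(m)})$ converge to $\mathcal{E}(P)$ in a strong enough sense (e.g., Hausdorff convergence on compact sets, using that the relevant minimum in the definition of $h^0$ only ever involves finitely many lattice translates once $D$ and a bound on $h^0$ are fixed — this finiteness is exactly the kind of argument made in the proof of Proposition \ref{changeofring}) to conclude $h^0_{P^{(m)}}(D^{(m)}) \to h^0_P(D)$.

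Granting that, the argument finishes quickly: apply the same continuity statement to $K^{(m)} - D^{(m)}$, noting $K^{(m)} = \sum_i (\deg^{(m)}(v_i) - 2) v_i \to K$ since the vertex degrees are continuous in the $p_{ij}$, so $h^0(K^{(m)} - D^{(m)}) \to h^0(K - D)$. Passing to the limit in $h^0(D^{(m)}) - h^0(K^{(m)} - D^{(m)}) = \deg(D^{(m)}) + 1 - g^{(m)}$ yields the desired identity for $(G, D)$. The main obstacle is the joint continuity claim for $h^0$ as the edge weights vary: one must rule out the possibility that a translate $\mathcal{F}_0 + \sum c_j H_j(P^{(m)})$ "slides in from infinity" and makes $h^0$ drop discontinuously in the limit. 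The remedy is the uniform lower bound on the relevant lattice coefficients $c_j$ — coming from $P_k^{-1} \ge 0$ and the boundedness argument already used in Proposition \ref{changeofring} — which shows that for $D$ and $D^{(m)}$ in a fixed compact set and $h^0$ bounded, only finitely many lattice translates (uniformly in $m$ for $m$ large) are ever relevant, reducing the continuity to that of finitely many affine-linear conditions in the entries of $P$.
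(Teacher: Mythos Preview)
Your approach is correct and is essentially the same as the paper's: the paper's argument for this proposition is just the two paragraphs preceding it, which assert continuity of $\mathcal{E}$ in $P$ and of $h^0$ (as a taxicab-type distance to $\mathcal{E}$) in $D$, and then invoke density of $\Q$ in $\R$. You have supplied considerably more detail than the paper does---in particular, you correctly isolate the one genuine issue (ruling out translates ``sliding in from infinity'') and point to the right fix (the finiteness/boundedness of the relevant lattice coefficients via $P_k^{-1}\ge 0$, as in Proposition~\ref{changeofring})---but the underlying strategy is identical.
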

\begin{proof}
Assume that \ref{RRTheorem} holds when $G$ is a $\Q$-graph and $D$ is a $\Q$-divisor.

Suppose that $G$ is a $n$-vertex $\R$-graph and $D$ a $\R$-divisor on $G$. 
Choose any $\epsilon \in \R$ such that $\epsilon >0$.
Since $\Q$ is dense in $\R$, we can choose a $\Q$ divisor $D'$ such that 
\[ 
0 \le D'(v_{i})-D(v_{i}) < \frac{\epsilon}{n}
\] 
each each vertex $v_{i}$ of $G$.  Similarly, we can choose nonnegative edge-weights $p'_{ij} \in \Q$ such that
\[
|p_{ij} -p'_{ij}| < \frac{2\epsilon}{n(n-1)}
\] 
for each $1 \le i<j \le n$, which defines a $\Q$-graph $G'$.  
Let $\deg(v'_{j}) = \sum_{i \ne j} p'_{ij}$ be the degree of the $j$th vertex of $G'$, and set
$K'(v_{i}) = \deg(v'_{i}) -2$ and $g' = \sum_{i<j} p'_{ij} - n + 1$.  We then have
$0 \le \deg(D')-\deg(D) < \epsilon$
and 
$|g'-g|< \epsilon$.

From the definition of $h^{0}$ if follows that 
there is a $\R$-divisor $E$ such that $D-E \ge 0$ with $\deg(D-E)=d$ and $|E|=\emptyset$. 
Since $h^{0}(D)$ varies continuously with the coordinates of $D$, 
it follows from Corollary~\ref{changeofring} that since $\deg(D'-E) -\deg(D-E) < \epsilon$,
$h^{0}(D')-h^{0}(D) < \epsilon$.  Similarly, $|h^{0}(K'-D') - h^{0}(K-D)| < 2 \epsilon$.

Since \ref{RRTheorem} holds for $D'$, 
\[ 
h^{0}(D') - h^{0}(K'-D') - \deg(D') - 1  + g' = 0,
\]
which implies that 
\[ 
|h^{0}(D) - h^{0}(K-D) - \deg(D) -1 + g | < 5 \epsilon.
\]
Since $\epsilon$ was arbitrary, we have
\[
h^{0}(D) - h^{0}(K-D) - \deg(D) -1 + g = 0.
\]

\end{proof}


\section{Scaling}

Suppose that $G$ is an $R$-graph, with edge weights $p_{ij}$.
For any $a>0$, $a \in R \subset \R$,
define $aG$ to be the $R$-graph with the same vertices, and edge weights $\{ap_{ij}\}$.
In other words, if $P$ defines $G$, then $aG$ is the $R$-graph defined by the matrix $aP$.

We will use subscripts to denote which $R$-graph we are using to compute with, e.g.,
$|D|_G$, $h^0_G(D)$, etc. if necessary.
 
For any divisor $D$ on $G$ and $a > 0$, define
\[
T_a(D) = aD + (a-1)I
\]
where 
\[
I= \sum_i 1 \cdot v_i \p
\]

The transformation $T_a$ is a homothety by $a$, centered at $-I$.

\begin{lemma} \label{L1}
Let $D$ be an $R$-divisor.  If $a,b>0$ with $a,b \in R$, then the following hold:
\begin{enumerate}
\item $T_b\circ T_b  = T_{ab}$
\item $T_a(D+H) = T_a(D) + aH$
\item $\lceil D \rceil \geq 0  \Leftrightarrow  \lceil T_a(D)) \rceil \geq 0$
\item $|D|_G \neq \emptyset  \Leftrightarrow |T_a(D)|_{aG} \neq \emptyset$
\item $|D-E|_G \neq \emptyset \Leftrightarrow |T_a(D)-aE|_{aG} \neq \emptyset$
\end{enumerate}
\end{lemma}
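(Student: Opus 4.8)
The plan is to prove the five assertions in sequence, since each is a short computation and the later ones build on the earlier ones; the only real content is bookkeeping about how $T_a$ interacts with the ceiling operation and with principal divisors, and item (3) is where the essential inequality manipulation happens.

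\medskip

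For (1), I would simply compose: $T_a(T_b(D)) = a(bD + (b-1)I) + (a-1)I = abD + (ab - a)I + (a-1)I = abD + (ab-1)I = T_{ab}(D)$. For (2), note $T_a(D + H) = a(D+H) + (a-1)I = (aD + (a-1)I) + aH = T_a(D) + aH$; here I should remark that if $H \in \PDiv(G)$ then $aH \in \PDiv(aG)$, because the generators $H_j$ of $\PDiv(G)$ scale to the generators of $\PDiv(aG)$ (the columns of $P$ scale to the columns of $aP$), and the $\Z$-linear combination is preserved — this observation is what makes (4) and (5) work.

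\medskip

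For (3), the claim is $\lceil D \rceil \ge 0 \iff \lceil T_a(D) \rceil \ge 0$. Using the equivalent formulation $\lceil D' \rceil \ge 0 \iff D' > -1$ noted in the introduction, this reduces to: $d_i > -1$ for all $i$ iff $a d_i + (a-1) > -1$ for all $i$. But $a d_i + (a - 1) > -1 \iff a d_i > -a \iff d_i > -1$, using $a > 0$. So (3) follows immediately from this pointwise equivalence; I expect this to be the one step that needs to be written carefully, but it is still routine. For (4), chain the previous items: $|D|_G \ne \emptyset$ means there is $H \in \PDiv(G)$ with $\lceil D + H \rceil \ge 0$; apply $T_a$ and use (2) to get $T_a(D + H) = T_a(D) + aH$ with $aH \in \PDiv(aG)$, then (3) gives $\lceil T_a(D) + aH \rceil \ge 0$, so $|T_a(D)|_{aG} \ne \emptyset$; the converse runs the same argument with $T_{1/a}$ (legitimate since $1/a$ need not lie in $R$, but we may invoke Proposition \ref{changeofring}, or more simply observe the equivalences above are genuinely two-way without needing $T_{1/a}$ as an operation — I would phrase each bullet as a biconditional chain to avoid this). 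Finally (5) is (4) applied to the divisor $D - E$: since $T_a(D) - aE = a(D-E) + (a-1)I = T_a(D-E)$, we get $|D - E|_G \ne \emptyset \iff |T_a(D-E)|_{aG} \ne \emptyset \iff |T_a(D) - aE|_{aG} \ne \emptyset$.

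\medskip

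The main obstacle is essentially presentational rather than mathematical: making sure the scaling $H \mapsto aH$ is correctly identified as a bijection $\PDiv(G) \to \PDiv(aG)$ (so that linear equivalence on $G$ transports to linear equivalence on $aG$), and handling the "only if" directions of (4) and (5) cleanly given that $a^{-1}$ may fall outside $R$. I would resolve the latter by stating each equivalence symmetrically from the start — the pointwise inequality $d_i > -1 \iff a d_i + (a-1) > -1$ is visibly reversible — rather than by appealing to an inverse homothety.
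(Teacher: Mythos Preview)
Your proposal is correct and follows essentially the same route as the paper: each item is handled by the same direct computation, with (3) reduced to the pointwise equivalence $d_i>-1\iff ad_i+(a-1)>-1$, (4) obtained from (2) and (3) together with $aH\in\PDiv(aG)$, and (5) by applying (4) to $D-E$ via $T_a(D-E)=T_a(D)-aE$. Your extra care about the converse of (4) and the bijection $\PDiv(G)\to\PDiv(aG)$ is more explicit than the paper (which simply says ``the converse is an identical argument''), but the substance is identical.
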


\begin{proof} \hspace{2cm}
\begin{enumerate}
\item Suppose that $D = \sum_i d_i \cdot v_i$. Then:
\begin{eqnarray*}
T_a(T_b(D)) &=& T_a \left( \sum_{i} (b d_i +b-1)\cdot v_i \right) \\
                     &=& \sum_{i} (a(b d_i + b -1) + a-1) \cdot v_i \\
                     &=&  \sum_{i} (ab d_i + ab -a + a-1) \cdot v_i \\
                     &=& \sum_{i} (ab d_i + ab -1) \cdot v_i  \\
                     &=& T_{ab}(D) \p
\end{eqnarray*}                      
\item Let $a>0$ and $D,H \in \Div(G)$, then
\begin{eqnarray*}
T_a(D+H) &=&  a(D+H) + (a-1)I \\
                 &=& aD+ aH + (a-1)I \\
                 &=& T_a(D) + aH \p
\end{eqnarray*}   
\item Let $D=\sum_{i} d_i \cdot v_i \in \Div(G)$ and $a > 0$.
Since 
$T_a(D) = \sum_{i} ( a d_i +a-1) \cdot v_i$, we have
\begin{eqnarray*}
\lceil T_a(D)) \rceil \geq 0  & \Leftrightarrow & ad_i+a-1 > -1 \mbox{ for each } i \\
                              & \Leftrightarrow & d_i > -1 \mbox{ for each } i\\
                              & \Leftrightarrow & \lceil D \rceil \geq 0 \p
\end{eqnarray*}
\item Suppose $|D|_G \neq \emptyset$.
Then there is a $H \in \PDiv(G)$
such that $\lceil D+H\rceil \geq 0$.
Since $T_a(D+H)= T_a(D)+aH$ and $aH \in \PDiv(aG)$, 
by part (3)
we have $\lceil T_a(D)+aH \rceil \geq 0$ and thus $|T_a(D)|_{aG} \neq \emptyset$.

The converse is an identical argument.

\item Let $D'=D-E$; then from (4),
$ |D'|_G \neq \emptyset \Leftrightarrow | T_a(D')|_{aG}   \neq \emptyset $
where $T_a(D') = T_a(D-E) = T_a(D)-aE$.

\end{enumerate}
\end{proof}

\begin{corollary} \label{C1}
$ h^0_{aG}(T_a(D)) = ah^0_G(D)$
\end{corollary}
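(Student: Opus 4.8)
The plan is to show the two sides of the claimed equality by reinterpreting the defining minima under the homothety $T_a$. Recall that $h^0_G(D)$ is the minimum of $\deg(E)$ over $R$-divisors $E \ge 0$ with $|D-E|_G = \emptyset$, and similarly $h^0_{aG}(T_a(D))$ is the minimum of $\deg(E')$ over $R$-divisors $E' \ge 0$ with $|T_a(D) - E'|_{aG} = \emptyset$. The natural bijection to exploit is $E \mapsto aE$: since $a > 0$ and $a \in R$, the map $E \mapsto aE$ is a degree-scaling bijection from $\{E \in \Div(G) : E \ge 0\}$ onto itself (as an $R$-module automorphism up to scalar), with $\deg(aE) = a\deg(E)$. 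The key point is that part (5) of Lemma \ref{L1} says precisely that $|D - E|_G = \emptyset \Leftrightarrow |T_a(D) - aE|_{aG} = \emptyset$, so $E$ is admissible for the $h^0_G(D)$ minimization if and only if $aE$ is admissible for the $h^0_{aG}(T_a(D))$ minimization.

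Concretely, I would first take $E$ achieving (or approaching, if the infimum is not attained — though I would argue it is, since $\mathcal{E}$ is closed and we are taking taxicab distance to a closed set) the minimum defining $h^0_G(D)$, so $E \ge 0$, $E$ is an $R$-divisor, and $|D-E|_G = \emptyset$. By Lemma \ref{L1}(5), $|T_a(D) - aE|_{aG} = \emptyset$, and $aE \ge 0$ is an $R$-divisor with $\deg(aE) = a\deg(E) = a h^0_G(D)$; hence $h^0_{aG}(T_a(D)) \le a h^0_G(D)$. For the reverse inequality, take $E'$ achieving the minimum for $h^0_{aG}(T_a(D))$, set $E = (1/a)E'$, which is an $R$-divisor with $E \ge 0$ (here I use that $1/a$ times an element of $R$ that is divisible appropriately — more carefully, $E'$ can be taken of the form $aE$ because the reverse direction of Lemma \ref{L1}(5) shows any admissible $E'$ of the form $aE$ suffices, and by the change-of-rings Proposition \ref{changeofring} combined with density we may assume $E'$ ranges over $\R$-divisors, so $E = (1/a)E'$ is a legitimate $\R$-divisor). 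Then $|D - E|_G = \emptyset$ and $\deg(E) = (1/a)\deg(E') = (1/a) h^0_{aG}(T_a(D))$, giving $h^0_G(D) \le (1/a) h^0_{aG}(T_a(D))$, i.e. $a h^0_G(D) \le h^0_{aG}(T_a(D))$.

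The main subtlety — the step I expect to require the most care — is the bookkeeping about which ring the divisors $E$ and $E' = aE$ live in when $a \in R$ but $1/a \notin R$. The cleanest route is to invoke Proposition \ref{changeofring} at the outset to replace $R$ by $\R$ throughout (legitimate since the entries of $P$ and $D$ lie in $R \subset \R$, and we may also assume $a \in \R$); then both minima are over $\R$-divisors, the map $E \mapsto aE$ is a genuine bijection of $\{\R\text{-divisors} \ge 0\}$ to itself, and the degree scales by exactly $a$. With that reduction in hand, the argument above becomes a two-line sandwich: $h^0_{aG}(T_a(D)) \le a h^0_G(D)$ and $a h^0_G(D) \le h^0_{aG}(T_a(D))$, both immediate from Lemma \ref{L1}(5) and the linearity $\deg(aE) = a\deg(E)$. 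I would also remark that the infimum defining $h^0$ is attained (so "min" is justified), either by the closedness observation already made in the paper's discussion of $\mathcal{E}$ and taxicab distance, or by noting the feasible set is nonempty and the objective is bounded below by $0$.
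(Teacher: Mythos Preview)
Your proof is correct and follows the same approach as the paper: both exploit the substitution $E' \leftrightarrow aE$ together with Lemma~\ref{L1}(5) to identify the two minimizing sets, so that the degree scales by $a$. You are in fact more careful than the paper on one point: the paper's chain of equalities tacitly treats $E \mapsto aE$ as a bijection of nonnegative $R$-divisors, which fails when $1/a \notin R$, and your invocation of Proposition~\ref{changeofring} to pass to $\R$ first cleanly closes that gap.
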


\begin{proof}
Since $a>0$, from Lemma~\ref{L1} (5) we have
\begin{eqnarray*}
h^0_{aG}(T_a(D)) &=&
\min_{E' \in \Div(aG)}\{ \deg(E') \st E' \ge 0, |T_a(D) - E'|_{aG} = \emptyset \} \\
   &=& \min_{E \in \Div(G)}\{ \deg(aE) \st aE \ge 0, |T_a(D) - aE|_{aG} = \emptyset \} \\
   &=& a \left( \min_{E \in \Div(G)}\{ \deg(E) \st E \ge 0, |T_a(D) -aE |_{aG} = \emptyset \} \right)  \\
   &=& a \left( \min_{E \in \Div(G)}\{ \deg(E) \st E \ge 0, |D -E |_{G} = \emptyset \} \right) \\
   &=& a h^0_G(D) \p
\end{eqnarray*}
\end{proof}

\begin{lemma} \label{L2}
Let $D$ be an $R$-divisor.  If $a>0$ with $a \in R$ then the following hold:
\begin{enumerate}
\item $K_{aG} = T_a(K_G) + (a-1)I$
\item $K_{aG} - T_a(D) = T_a(K_G - D)$
\item $\deg(T_a(D)) = a \deg(D) + (a-1)(n)$
\item $g_{aG} = ag_G + (a-1)(n-1)$.
\end{enumerate}
\end{lemma}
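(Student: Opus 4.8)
The plan is to verify each of the four identities in Lemma~\ref{L2} by direct computation from the definitions, treating them in the order given since each builds on the previous one. All four statements are vertex-wise or scalar identities with no combinatorial content, so the work is purely algebraic; the only mild subtlety is keeping track of how the edge weights scale, since $aG$ has edge weights $ap_{ij}$ and hence $\deg_{aG}(v_i) = a\deg_G(v_i)$.

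For part (1), I would start from $K_{aG} = \sum_i (\deg_{aG}(v_i) - 2)\cdot v_i = \sum_i (a\deg_G(v_i) - 2)\cdot v_i$. On the other side, $T_a(K_G) = aK_G + (a-1)I = \sum_i \bigl(a(\deg_G(v_i)-2) + (a-1)\bigr)\cdot v_i = \sum_i (a\deg_G(v_i) - a - 1)\cdot v_i$, so adding $(a-1)I$ gives $\sum_i (a\deg_G(v_i) - 2)\cdot v_i = K_{aG}$, as claimed. For part (2), I would combine part (1) with the additivity property of $T_a$: we have $T_a(K_G - D) = aK_G - aD + (a-1)I$, while $K_{aG} - T_a(D) = \bigl(T_a(K_G) + (a-1)I\bigr) - \bigl(aD + (a-1)I\bigr) = T_a(K_G) - aD = aK_G + (a-1)I - aD$, and these agree. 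Part (3) is immediate: $\deg(T_a(D)) = \deg(aD + (a-1)I) = a\deg(D) + (a-1)\deg(I) = a\deg(D) + (a-1)n$.

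Part (4) requires unwinding the definition of genus. We have $g_{aG} = \sum_{i<j} (ap_{ij}) - n + 1 = a\sum_{i<j} p_{ij} - n + 1 = a(g_G + n - 1) - n + 1 = ag_G + a(n-1) - (n-1) = ag_G + (a-1)(n-1)$, which is the desired identity.

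There is no real obstacle here; the lemma is a sequence of bookkeeping identities. If anything needs care, it is simply remembering throughout that scaling the graph by $a$ scales every $p_{ij}$ by $a$ (and therefore every vertex degree by $a$), and then applying parts (1), (2) of Lemma~\ref{L1} to handle the interaction of $T_a$ with addition of divisors in part (2). These lemmas, together with Corollary~\ref{C1}, are evidently being assembled to prove that Riemann-Roch for $\Q$-graphs follows from Riemann-Roch for $\Z$-graphs by clearing denominators via a suitable scaling $a$, so the precise form of the error terms $(a-1)n$ and $(a-1)(n-1)$ matters: one should check at the end that the three contributions $ah^0_G(D) - ah^0_G(K_G-D)$, $a\deg(D)$, and $-ag_G$ each pick up exactly the right compensating $(a-1)$-term so that the scaled identity is equivalent to the original one, but that final consistency check belongs to the proof of Theorem~\ref{RRTheorem} rather than to this lemma.
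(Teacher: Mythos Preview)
Your proof is correct and follows essentially the same direct-computation approach as the paper's own proof: each part is verified by expanding the definitions of $K_{aG}$, $T_a$, $\deg$, and $g_{aG}$ and matching terms, with part~(2) derived from part~(1). The only differences are cosmetic (e.g., in part~(4) you factor via $a(g_G + n - 1) - n + 1$ whereas the paper regroups terms slightly differently), and your closing paragraph of context is extraneous to the lemma itself but accurate.
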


\begin{proof} \hspace{2cm}
\begin{enumerate}
\item Since $K_{aG} = \sum_{i} (a \deg(v_i) -2) \cdot v_i$, we have
\begin{eqnarray*}
T_a(K_G) &=& T_a(\sum_{i} (\deg(v_i)-2) \cdot v_i) \\
                &=& a\sum_{i} (\deg(v_i)-2) \cdot v_i  +  \sum_{i} (a - 1) \cdot v_i \\
                &=& \sum_{i} (a \deg(v_i) -2a +a - 1 ) \cdot v_i \\
                &=& \sum_{i} (a \deg(v_i) -a - 1 ) \cdot v_i \\                
                &=& K_{aG} - (a-1) I \p
\end{eqnarray*}              
\item \begin{eqnarray*}
K_{aG} - T_a(D) &=& T_a(K_G) + (a-1)I - T_a(D) \\
                           &=& aK_G + (a-1)I + (a-1)I -aD -(a-1)I \\
                           &=& a(K_G - D) + (a-1)I \\
                           &=& T_a(K_G - D) \p
\end{eqnarray*}       
\item \begin{eqnarray*}
\deg(T_a(D)) &=& \deg(aD + (a-1)I) \\
                      &=& a \deg(D) + (a-1) \deg(I) \\
                      &=& a \deg(D) + (a-1)(n) \p
\end{eqnarray*}        
\item \begin{eqnarray*}
g_{aG} &=& \sum_{i} ap_{ij} - n + 1 \\
             &=& a \sum_{i} p_{ij} - an + a + (a-1)n + 1 - a\\
             &=& a g_G + (a-1)(n-1) \p
\end{eqnarray*}     
\end{enumerate}  
\end{proof}


\section{Reduction to $\Z$-graphs}

\begin{theorem} \label{T1}
Let $a>0$; then 
\begin{equation} \label{RR1}
h^0_G(D) - h^0_G(K_G -D) = \deg(D) -g_G +1
\end{equation}
if and only if 
\begin{equation} \label{RR2}
h^0_{aG}(T_a(D)) - h^0_{aG}(K_{aG} -T_a(D)) = \deg(T_a(D)) -g_{aG} +1 \p 
\end{equation}
\end{theorem}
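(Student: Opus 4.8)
The plan is to show that the two displayed equations, (\ref{RR1}) for the graph $G$ and (\ref{RR2}) for the scaled graph $aG$, are in fact related by multiplying through by the positive constant $a$, so that one holds exactly when the other does. All the ingredients have already been assembled: Corollary~\ref{C1} converts the $h^0$ terms, and Lemma~\ref{L2} converts the degree, genus, and canonical-divisor terms.

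First I would rewrite the left-hand side of (\ref{RR2}) using Corollary~\ref{C1} and Lemma~\ref{L2}(2). By Lemma~\ref{L2}(2) we have $K_{aG} - T_a(D) = T_a(K_G - D)$, so $h^0_{aG}(K_{aG} - T_a(D)) = h^0_{aG}(T_a(K_G - D))$. Then Corollary~\ref{C1} gives $h^0_{aG}(T_a(D)) = a\, h^0_G(D)$ and $h^0_{aG}(T_a(K_G-D)) = a\, h^0_G(K_G - D)$. Hence the left-hand side of (\ref{RR2}) equals $a\bigl(h^0_G(D) - h^0_G(K_G - D)\bigr)$, which is $a$ times the left-hand side of (\ref{RR1}).

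Next I would rewrite the right-hand side of (\ref{RR2}). By Lemma~\ref{L2}(3), $\deg(T_a(D)) = a\deg(D) + (a-1)n$, and by Lemma~\ref{L2}(4), $g_{aG} = a g_G + (a-1)(n-1)$. Therefore
\[
\deg(T_a(D)) - g_{aG} + 1 = a\deg(D) + (a-1)n - a g_G - (a-1)(n-1) + 1 = a\bigl(\deg(D) - g_G\bigr) + (a-1) + 1 = a\bigl(\deg(D) - g_G + 1\bigr),
\]
so the right-hand side of (\ref{RR2}) is exactly $a$ times the right-hand side of (\ref{RR1}). Combining the two computations, (\ref{RR2}) is the equation obtained from (\ref{RR1}) by multiplying both sides by $a$; since $a > 0$, the two equations are equivalent, which is the claim.

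There is no real obstacle here — the theorem is a bookkeeping consequence of the scaling lemmas established in Section~5. The only point requiring a little care is the constant arithmetic in the right-hand side: one must track that the $(a-1)n$ coming from the degree and the $-(a-1)(n-1)$ coming from the genus combine with the $+1$ to produce exactly $a$ times the additive constant $+1$ in (\ref{RR1}), i.e. that $(a-1)n - (a-1)(n-1) + 1 = a$. I would write this step out explicitly rather than leave it to the reader.
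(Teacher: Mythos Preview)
Your proposal is correct and follows essentially the same approach as the paper: both show that (\ref{RR2}) is obtained from (\ref{RR1}) by multiplying through by $a$, using Corollary~\ref{C1} and Lemma~\ref{L2}(2) for the left-hand side and Lemma~\ref{L2}(3),(4) for the right-hand side. The only cosmetic difference is direction---the paper starts from (\ref{RR1}) and derives (\ref{RR2}), whereas you rewrite (\ref{RR2}) as $a$ times (\ref{RR1})---but the computations and cited lemmas are identical.
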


\begin{proof}
Let $a>0$.  Multiplying (\ref{RR1}) by $a$, we have
\[
ah^0_G(D) - ah^0_G(K_G -D) = a\deg(D) -ag_G +a .
\]
The left side of this equation is equal to
\[
h^0_{aG}(T_a(D)) - h^0_{aG}(T_a(K_G-D))
= h^0_{aG}(T_a(D)) - h^0_{aG}(K_{aG}-T_a(D))
\]
using Corollary \ref{C1} and Lemma \ref{L2} (2).
The right side of the equation is
\[
\deg(T_a(D)) - (a-1)(n) - g_{aG} + (a-1)(n-1) +a
= \deg(T_a(D))  -g_{aG} + 1
\]
using Lemma \ref{L2} (3) and (4).
This proves that (\ref{RR1}) implies (\ref{RR2});
the converse is identical.
\end{proof}

\begin{corollary}\label{ZQCorollary}
Suppose that the Riemann-Roch Theorem \ref{RRTheorem} is true for connected $\Z$-graphs.
Then the Riemann-Roch Theorem is true for connected $\Q$-graphs.
\end{corollary}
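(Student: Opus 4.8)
The plan is to feed the scaling machinery of Section~5 (Theorem~\ref{T1}) with a single integer dilation that clears all denominators, and then to use the ring-independence of $h^0$ (Proposition~\ref{changeofring}) to match the resulting identity with the assumed $\Z$-graph case. Let $G$ be a connected $\Q$-graph with edge weights $p_{ij}$, and let $D=\sum_i d_i\cdot v_i$ be a $\Q$-divisor on $G$. First I would choose a positive integer $N$ that simultaneously clears all denominators, i.e. such that $Np_{ij}\in\Z$ for every pair $i,j$ and $Nd_i\in\Z$ for every $i$; since only finitely many rationals are involved, $N$ can be taken to be a common multiple of their denominators.

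With such an $N$ fixed, the scaled graph $NG$ has edge matrix $NP$, whose off-diagonal entries are $-Np_{ij}\in\Z$ and whose diagonal entries are $N\deg(v_i)\in\Z$; since $Np_{ij}>0$ exactly when $p_{ij}>0$, the graph $NG$ has the same underlying unweighted graph as $G$ and is therefore a connected $\Z$-graph. Moreover $T_N(D)=ND+(N-1)I$ has $i$-th coordinate $Nd_i+N-1\in\Z$, so $T_N(D)$ is a $\Z$-divisor on $NG$. By the hypothesis, the Riemann-Roch Theorem~\ref{RRTheorem} holds for the connected $\Z$-graph $NG$ applied to the $\Z$-divisor $T_N(D)$; with $K_{NG}$ the canonical divisor of $NG$, this is precisely equation~(\ref{RR2}) with $a=N$, where for the moment the two $h^0$ terms are computed in the ring $\Z$.

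Next I would invoke Proposition~\ref{changeofring}. All entries of the edge matrix $NP$ and all coordinates of $T_N(D)$, and hence also of $K_{NG}-T_N(D)$ by Lemma~\ref{L2}(2), lie in both $\Z$ and $\Q$; therefore the two $h^0$ values in (\ref{RR2}) are the same whether computed over $\Z$ or over $\Q$. Thus (\ref{RR2}) holds with $a=N$ and with the $h^0$'s interpreted over $\Q$, which is the form to which Theorem~\ref{T1} applies (using Corollary~\ref{C1} and Lemma~\ref{L2}). Applying Theorem~\ref{T1} with $a=N$ then yields (\ref{RR1}) for $G$, namely $h^0_G(D)-h^0_G(K_G-D)=\deg(D)-g_G+1$. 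Since $G$ and $D$ were arbitrary, this establishes the Riemann-Roch Theorem for all connected $\Q$-graphs.

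I expect that the only point requiring genuine care is the bookkeeping in the previous paragraph: making sure that the $h^0$ occurring in the hypothesis (the $\Z$-graph Riemann-Roch, equivalently Baker-Norine via Lemma~\ref{LBN}) is literally the same quantity that Theorem~\ref{T1} manipulates over $\Q$. That identification is exactly the content of Proposition~\ref{changeofring}, so it is available; everything else — the choice of $N$, the connectedness of $NG$, and the integrality of $T_N(D)$ — is routine verification.
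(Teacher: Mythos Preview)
Your proof is correct and follows essentially the same route as the paper: clear denominators with a positive integer $N$ so that $NG$ is a connected $\Z$-graph and $T_N(D)$ is a $\Z$-divisor, apply the $\Z$-graph hypothesis to obtain~(\ref{RR2}), and then invoke Theorem~\ref{T1} to deduce~(\ref{RR1}). Your explicit appeal to Proposition~\ref{changeofring} to reconcile the $\Z$- and $\Q$-computed $h^0$ values is a point the paper leaves implicit (having remarked after that proposition that one may ``simply consider the case of $\R$-graphs''), but otherwise the arguments coincide.
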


\begin{proof}
Given a connected $\Q$-graph $G$ and a $\Q$-divisor $D$ on it,
there is an integer $a > 0$ such that
$aG$ is a connected $\Z$-graph and $T_a(D)$ is a $\Z$-divisor.
Therefore by hypothesis, the Riemann-Roch statement (\ref{RR2}) holds.
Hence by Theorem \ref{T1}, (\ref{RR1}) holds, 
which is the Riemann-Roch theorem for $D$ on $G$.
\end{proof}

We now have the ingredients to prove Theorem \ref{RRTheorem}.
\begin{proof}
First, we note again that the Riemann-Roch Theorem of \cite{BN}
is equivalent to the Riemann-Roch theorem for connected $\Z$-graphs
in our terminology.
Therefore, using Corollary \ref{ZQCorollary}, we conclude that
the Riemann-Roch Theorem is true for connected $\Q$-graphs.
Then, using Proposition \ref{QRProposition}, we conclude
that Riemann-Roch holds for connected $\R$-graphs.

Finally, Proposition \ref{changeofring} finishes the proof
of the Riemann-Roch theorem for divisors on arbitrary $R$-graphs,
for any subring $R \subset \R$.
\end{proof}

\end{document}